\documentclass[12pt]{amsart}

\usepackage[utf8]{inputenc}
\usepackage{amsmath} 
\usepackage{amsfonts}
\usepackage{amssymb}
\usepackage{mathtools}
\usepackage{tikz}
\usepackage{float}
\usepackage{ytableau}

\oddsidemargin=0in
\evensidemargin=0in
\textwidth=6.50in

\headheight=10pt
\headsep=10pt
\topmargin=.5in
\textheight=8in

\theoremstyle{definition}
\newtheorem{theorem}{Theorem}[section]
\newtheorem*{theorem*}{Theorem}
\newtheorem{proposition}[theorem]{Proposition}

\newtheorem{lemma}[theorem]{Lemma}
\newtheorem{corollary}[theorem]{Corollary}

\newtheorem{definition}[theorem]{Definition}
\newtheorem{example}[theorem]{Example}

\theoremstyle{remark}
\newtheorem{remark}[theorem]{Remark}

\newcommand{\set}{\text{set}}
\newcommand{\llt}{\text{LLT}}
\newcommand{\asc}{\text{asc}}
\newcommand{\syt}{\text{SYT}}
\newcommand{\N}{\mathbb N}

\makeatletter
\@namedef{subjclassname@2020}{\textup{2020} Mathematics Subject Classification}
\makeatother

\begin{document}

\title[LLT polynomials of two-headed melting lollipops into ribbon Schurs]{Expanding the unicellular LLT polynomials of two-headed melting lollipops into ribbon Schurs}

\author{Victor Wang}
\address{
 Department of Mathematics,
 Harvard University,
 Cambridge MA 02138, USA}
\email{vwang@math.harvard.edu}

\subjclass[2020]{05C15, 05C25, 05E05}
\keywords{LLT polynomials, ribbon Schur functions, Schur-positivity, unit interval graphs}

\begin{abstract} 
We prove a simple formula expanding the unicellular LLT polynomials of a class of graphs we call two-headed melting lollipops into ribbon Schur functions. Our work extends the Schur expansion originally found for melting lollipop graphs by Huh, Nam, and Yoo.
\end{abstract}

\maketitle
\tableofcontents

\section{Introduction}\label{sec:intro}  
In 1997, Lascoux, Leclerc, and Thibon \cite{LLT} introduced LLT polynomials, a certain $q$-deformation of products of Schur functions. When expanded into the basis of Schur functions, it is known that all coefficients are nonnegative \cite[Corollary 6.9]{GrojHaiman}, but in general, an explicit combinatorial description of the coefficients is not known and remains a major open problem. Combinatorial descriptions for the coefficients in special cases have been studied across \cite{Blasiak, HuhNamYoo, Roberts, Tom}.

For a subclass of LLT polynomials known as unicellular LLT polynomials, the polynomials have an alternative combinatorial description in terms of colourings of unit interval graphs \cite{AP}. For the unit interval graphs known as melting lollipops, Huh, Nam, and Yoo \cite{HuhNamYoo} gave an explicit combinatorial description for the Schur expansion of the associated unicellular LLT polynomials. Though not stated in their work, Huh, Nam, and Yoo implicitly showed something stronger: that the associated unicellular LLT polynomials expand as a nonnegative sum of ribbon Schur functions. Our work in this paper describes a formula expanding the unicellular LLT polynomials associated to a larger class of unit interval graphs into ribbon Schur functions.

Our paper introduces the necessary background in Section~\ref{sec:bg}. We prove our formula into ribbon Schurs for the case of melting lollipop graphs in Section~\ref{sec:melt}, and extend it to a larger class of graphs we call two-headed melting lollipops in Section~\ref{sec:two}.

\section{Background}\label{sec:bg}
Let $[n]$ denote the set $\{1,2,\dots,n\}.$ A \textit{composition} $\alpha$ is an ordered list $\alpha_1\cdots\alpha_\ell$ of positive integers. If $\alpha_1+\cdots+\alpha_\ell=n$, write $\alpha\vDash n$ and we say that $\alpha$ is a composition of $n$. Compositions of $n$ are naturally in bijection with subsets of $[n-1]$, by considering the map defined by $\set(\alpha)=\{\alpha_1,\alpha_1+\alpha_2,\dots,\alpha_1+\dots+\alpha_{\ell-1}\}$. The \textit{concatenation} $\alpha\cdot\beta\vDash n+m$ of $\alpha=\alpha_1\dots\alpha_\ell\vDash n$ and $\beta=\beta_1\dots\beta_k\vDash m$ is the composition satisfying $\set(\alpha\cdot\beta)=\{\alpha_1,\alpha_1+\alpha_2,\dots,\alpha_1+\dots+\alpha_\ell,n+\beta_1,n+\beta_1+\beta_2,\dots,n+\beta_1+\dots+\beta_{k-1}\}$. Their \textit{near concatenation} $\alpha\odot\beta\vDash n+m$ satisfies $\set(\alpha\odot\beta)=\set(\alpha\cdot\beta)\setminus\{n\}$. The \textit{reverse} of a composition $\alpha=\alpha_1\dots\alpha_\ell$ is the composition $\alpha^r=\alpha_\ell\dots\alpha_1$. More generally, for a list of integers $\mathbf v=(v_1,\dots,v_n)$, its reverse is the list $(v_n,\dots,v_1)$. For a list of integers $\mathbf v=(v_1,\dots,v_n)$ and $S\subseteq [n]$, we write $\mathbf v(S)$ to denote the value $\sum_{s\in S}v_s$.

\begin{definition}
A \textit{unit interval graph} $G$ on $n$ vertices is a graph with vertex set $[n]$ with the property that if $(i,j)\in E(G)$ and $i\le k<l\le j$ then $(k,l)\in E(G).$
\end{definition}

When we draw a unit interval graph, we will draw its vertices in increasing order from left to right. Two unit interval graphs will encounter are the \textit{complete graph} $K_n$ on $n$ vertices with edge set $E(K_n)=\{(i,j):1\le i<j\le n\}$ and the \textit{path} $P_n$ on $n$ vertices with edge set $E(P_n)=\{(i,i+1):1\le i\le n+1)\}$. 

The \textit{reverse} $G^r$ of a unit interval graph $G$ on $n$ vertices is the unit interval graph on vertex set $[n]$ and edge set $E(G^r)=\{(n+1-j,n+1-i):(i,j)\in E(H)\}$. The \textit{concatenation} $G+H$ of two unit interval graphs $G$ and $H$ on $n$ and $m$ vertices, respectively, is the unit interval graph on vertex set $[n+m-1]$ and edge set $E(G+H)=E(G)\cup\{(i+n-1,j+n-1):(i,j)\in E(H)\}$. Their \textit{disjoint union} $G\cup H$ is the unit interval graph on the vertex set $[n+m]$ and edge set $E(G\cup H)=E(G)\cup \{(i+n,j+n):(i,j)\in E(H)\}$.

\begin{definition}
Given a unit interval graph $G$ on $n$ vertices, its \textit{area sequence} is the sequence $\mathbf a=(a_1,\dots,a_{n-1})$, where $a_i=\max(\{0\}\cup\{j-i:(i,j)\in E(G)\}).$ The \textit{transpose} of $\mathbf a$ is the area sequence $\mathbf a^T$ associated to $G^r$.
\end{definition}

\begin{example}
In the picture below, the area sequence of $G$ is $\mathbf a=(2,2,1,1)$. The area sequence of $G^r$, the reverse graph of $G$, is $\mathbf a^T=(1,2,2,1)$. Note that in general, $\mathbf a$ is not the reverse of $\mathbf a^T$.
\begin{center}
\begin{tikzpicture}

\coordinate (A) at (11.75,0);
\coordinate (B) at (12.5,0);
\coordinate (C) at (13.25,0);
\coordinate (D) at (14,0);
\coordinate (E) at (14.75,0);
\draw[black] (A)--(B)--(C)--(D)--(E);
\draw[black] (13.25,0) arc (0:180:.75);
\draw[black] (14,0) arc (0:180:.75);

\filldraw[black] (A) circle [radius=2pt];
\filldraw[black] (B) circle [radius=2pt];
\filldraw[black] (C) circle [radius=2pt];
\filldraw[black] (E) circle [radius=2pt];
\filldraw[black] (D) circle [radius=2pt];
\node [] at (13.25,-.5) {$G$};

\coordinate (A) at (16.75,0);
\coordinate (B) at (17.5,0);
\coordinate (C) at (18.25,0);
\coordinate (D) at (19,0);
\coordinate (E) at (19.75,0);
\draw[black] (A)--(B)--(C)--(D)--(E);
\draw[black] (19.75,0) arc (0:180:.75);
\draw[black] (19,0) arc (0:180:.75);

\filldraw[black] (A) circle [radius=2pt];
\filldraw[black] (B) circle [radius=2pt];
\filldraw[black] (C) circle [radius=2pt];
\filldraw[black] (E) circle [radius=2pt];
\filldraw[black] (D) circle [radius=2pt];
\node [] at (18.25,-.5) {$G^r$};
\end{tikzpicture}
\end{center}
\end{example}

\begin{definition}\cite[Definition 3.7]{AP}
Given an area sequence $\mathbf a$ associated to the unit interval graph $G$ on $n$ vertices, the \textit{unicellular LLT polynomial} of $\mathbf a$ (or, synonymously, the unicellular LLT polynomial of $G$) is defined to be
$$\llt_{\mathbf a}(\mathbf x;q)=\sum_{\kappa:[n]\to\N}q^{\asc(\kappa)}x_{\kappa(1)}\cdots x_{\kappa(n)},$$
where for each map $\kappa:[n]\to\N,$ the number $\asc(\kappa)$ counts the number of pairs $(i,j)\in E(G)$ such that $i<j$ and $\kappa(i)<\kappa(j).$
\end{definition}

LLT polynomials lie in the ring of symmetric functions, and as a consequence, $\llt_{\mathbf a}(\mathbf x;q)=\llt_{\mathbf a^T}(\mathbf x;q).$ Note also that the unicellular LLT polynomial of $G\cup H$ is simply the product of the unicellular LLT polynomials of $G$ and $H$. Unicellular LLT polynomials satisfy a recurrence first proved by Lee as \cite[Theorem 3.4]{Lee}. We will use the formulation stated in \cite{HuhNamYoo}.

\begin{theorem}\label{theorem:modular}\cite[Theorem 3.4]{HuhNamYoo}
Let area sequence $\mathbf a=(a_1,\dots,a_{n-1})$ and let $i$ be such that $a_{i-1}+1\le a_i$ (letting $a_0$ be $0$). Suppose area sequences $\mathbf a'$ and $\mathbf a''$ differ from $\mathbf a$ only in position $i$ with $a_i=a_i'+1=a_i''+2$. If $a_{i+a_i-1}=a_{i+a_i}+1$, then
$$\llt_{\mathbf a}(\mathbf x;q)+q\llt_{\mathbf a''}(\mathbf x;q)=(1+q)\llt_{\mathbf a'}(\mathbf x;q).$$
\end{theorem}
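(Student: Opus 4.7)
My plan is to reduce the identity to a coloring bijection. Write $k = i + a_i - 1$ and $l = i + a_i$, and let $G, G', G''$ be the unit interval graphs associated to $\mathbf{a}, \mathbf{a}', \mathbf{a}''$, so that $G'' = G \setminus \{(i,k),(i,l)\}$ and $G' = G \setminus \{(i,l)\}$. Expanding each LLT polynomial as a sum over colorings $\kappa : [n] \to \N$, a routine case analysis depending on whether $\kappa(i) < \kappa(k)$ and whether $\kappa(i) < \kappa(l)$ reduces the claimed identity to
\[
\sum_{\kappa \in \mathcal{C}_1} q^{\asc_{G''}(\kappa)} x^\kappa \;=\; q \sum_{\kappa \in \mathcal{C}_2} q^{\asc_{G''}(\kappa)} x^\kappa,
\]
where $\mathcal{C}_1 = \{\kappa : \kappa(k) \le \kappa(i) < \kappa(l)\}$ and $\mathcal{C}_2 = \{\kappa : \kappa(l) \le \kappa(i) < \kappa(k)\}$.

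I would then prove this reduced identity by exhibiting the monomial-preserving bijection $\phi : \mathcal{C}_1 \to \mathcal{C}_2$ that swaps the values of $\kappa$ at the vertices $k$ and $l$. It is immediate that $\phi$ maps $\mathcal{C}_1$ into $\mathcal{C}_2$, so the remaining task is to verify $\asc_{G''}(\phi(\kappa)) = \asc_{G''}(\kappa) - 1$ for every $\kappa \in \mathcal{C}_1$. I would do this by tracking each $G''$-edge incident to $k$ or $l$ in turn: the edge $(k,l)$ contributes the full $-1$ because $\kappa(k) < \kappa(l)$ strictly on $\mathcal{C}_1$; for each $m \in \{l+1, \dots, l+a_l\}$, the hypothesis $a_k = a_l + 1$ makes both $(k,m)$ and $(l,m)$ edges of $G''$, so the swap interchanges their ascent contributions; and for each $m \in [i+1, k-1]$, the unit interval property applied to $(i, l) \in E(G)$ makes both $(m,k)$ and $(m,l)$ edges of $G''$, which likewise pair up.

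The main obstacle is ruling out stray vertices $m < i$ that are adjacent to $k$ but not to $l$ in $G''$, since such an $m$ would spoil the pairings above. Here the condition $a_{i-1} + 1 \le a_i$ must be combined with the standard inequality $a_{m+1} \ge a_m - 1$ satisfied by every area sequence; iterating yields $a_m \le a_i + (i - m) - 2 < k - m$ for all $m < i$, so no such $m$ is adjacent to $k$ at all. Once this is secured, the bijection closes the argument.
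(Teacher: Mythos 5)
Your proposal is correct, but there is nothing in the paper to compare it against: the paper imports this statement wholesale from \cite[Theorem 3.4]{HuhNamYoo} (itself a reformulation of \cite[Theorem 3.4]{Lee}) and gives no proof. What you have written is essentially a reconstruction of Lee's original argument, and it checks out at every delicate point. Your case analysis on $\bigl([\kappa(i)<\kappa(k)],[\kappa(i)<\kappa(l)]\bigr)$ is right: the cases $(T,T)$ and $(F,F)$ cancel identically, and the surviving terms assemble into $(q-1)\bigl(\Sigma_{\mathcal C_1}-q\,\Sigma_{\mathcal C_2}\bigr)=0$, which is exactly your reduced identity. The hypothesis $a_k=a_l+1$ with $l=k+1$ gives $N^+_{G''}(k)=\{l\}\cup N^+_{G''}(l)$; the clique on $[i,l]$ forced by $(i,l)\in E(G)$ handles the left neighbors in $[i+1,k-1]$; the vertex $i$ itself is adjacent to neither $k$ nor $l$ in $G''$ by construction; and your iteration of $a_{m+1}\ge a_m-1$ from the hypothesis $a_{i-1}+1\le a_i$ (giving $m+a_m\le k-1$ for all $m<i$) correctly disposes of the one genuine danger, a stray left neighbor of $k$ below $i$. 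Note that this last hypothesis is also precisely what makes $\mathbf a''$ a valid area sequence ($a_i''\ge a_{i-1}-1$ requires $a_i\ge a_{i-1}+1$), and validity of $\mathbf a''$ forces $a_i\ge 2$, so $i<k<l$ are genuinely distinct, as your involution needs. The only cosmetic gap is the boundary case $i+a_i=n$, where $a_l$ should be read as $0$; your common-right-neighbor set $\{l+1,\dots,l+a_l\}$ is then empty and the argument goes through unchanged. In short: a complete, self-contained proof of a result the paper only cites.
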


Two other classes of symmetric functions we will be interested in are ribbon Schur functions and Schur functions.

\begin{definition}
The \textit{ribbon Schur function} $r_\alpha$ for $\alpha\vDash n$ is given by
$$r_\alpha=\sum_{\substack{\kappa:[n]\to\N\\\kappa(i)<\kappa(i+1)\text{ if }i\in \set(\alpha)\\\kappa(i)\ge\kappa(i+1)\text{ if }i\not\in \set(\alpha)}}x_{\kappa(1)}\cdots x_{\kappa(n)}.$$
\end{definition}

Ribbon Schur functions are symmetric functions with $r_\alpha r_\beta=r_{\alpha\cdot\beta}+r_{\alpha\odot\beta}$ and $r_\alpha=r_{\alpha^r}$. They expand with nonnegative coefficients counting a class of combinatorial objects into functions known as Schur functions. The Schur functions $\{s_\lambda\}$ are a basis for symmetric functions indexed by integer partitions. A \textit{partition} $\lambda=\lambda_1\cdots\lambda_\ell$ is a composition with weakly decreasing parts. If $\lambda_1+\dots+\lambda_\ell=n$, we write $\lambda\vdash n$ and say that $\lambda$ is a partition of $n$.

It is an open problem to give a combinatorial formula for the expansion of unicellular LLT polynomials into Schur functions, though it is known via a representation-theoretic argument that all coefficients are nonnegative \cite[Corollary 6.9]{GrojHaiman}. Thus, if we can give a formula writing a unicellular LLT polynomial as a nonnegative sum of ribbon Schur functions, we obtain a combinatorial formula for the expansion into Schur functions.

We will conclude this section by describing the expansion of ribbon Schurs into Schurs. The \textit{Young diagram} of a partition $\lambda$ is a left-justified array of cells with $\lambda_i$ cells in the $i$th row. A \textit{standard Young tableau} of shape $\lambda$ is a filling of the Young diagram of $\lambda$ with the integers $1,\dots,n$ occurring exactly once each so that the rows and columns are increasing. Write $\syt(\lambda)$ for the set of all standard Young tableaux of shape $\lambda$. The \textit{descent set} $D(T)$ of a standard Young tableau $T$ is the subset of $[n-1]$ consisting of all the integers $i$ for which $i+1$ is in a later row than $i$.

\begin{proposition}\label{prop:ribbon}
The ribbon Schur function $r_\alpha$ for $\alpha\vDash n$ expands into the basis of Schur functions via
$$r_\alpha=\sum_{\lambda\vdash n}\sum_{\substack{T\in\syt(\lambda)\\ D(T)=\set(\alpha)}}s_\lambda.$$
\end{proposition}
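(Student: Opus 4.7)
The plan is to realize the ribbon Schur function $r_\alpha$ as the skew Schur function of a ribbon shape, and then invoke the Schur expansion of skew Schur functions via the Littlewood--Richardson rule. Specifically, let $R_\alpha$ denote the connected skew shape containing no $2 \times 2$ square whose row lengths from top to bottom are $\alpha_1, \ldots, \alpha_\ell$, with consecutive rows overlapping in exactly one column. I will first argue that $r_\alpha = s_{R_\alpha}$. The idea is that reading the entries of a semistandard Young tableau of shape $R_\alpha$ along the ribbon (right-to-left within each row, top-to-bottom between rows) produces a sequence $\kappa\colon[n]\to\N$ satisfying precisely the conditions defining $r_\alpha$: weak descents $\kappa(i) \geq \kappa(i+1)$ at positions within a single row (i.e.\ $i \notin \set(\alpha)$), and strict ascents $\kappa(i) < \kappa(i+1)$ at positions crossing a row boundary (i.e.\ $i \in \set(\alpha)$).

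Second, I will invoke the Littlewood--Richardson rule to expand $s_{R_\alpha} = \sum_{\lambda \vdash n} c^{R_\alpha}_\lambda \, s_\lambda$, and argue that $c^{R_\alpha}_\lambda$ equals the number of standard Young tableaux $T \in \syt(\lambda)$ with $D(T) = \set(\alpha)$. This identification is classical: one explicit route is to rectify each LR filling of $R_\alpha$ via jeu de taquin to obtain a straight-shape SYT, and to verify that the descent set of the rectified tableau records the row-break positions of $R_\alpha$, which are precisely the elements of $\set(\alpha)$.

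The main obstacle is verifying the descent-set correspondence in the second step: one must carefully track descents through the rectification (or equivalently through the RSK correspondence applied to the reading word of the ribbon filling) to confirm the matching of $\set(\alpha)$ with the descent set of the resulting SYT. An alternative route that avoids an explicit bijection is to pass through the fundamental quasisymmetric basis, invoking Gessel's expansion $s_D = \sum_{T \in \syt(D)} F_{D(T)}$ for both the ribbon skew shape and straight shapes, and then matching coefficients using the linear independence of the $F$-basis.
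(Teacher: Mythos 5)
The paper offers no proof of this proposition: it is stated as classical background (it is Gessel's theorem relating descent sets to ribbon Schur functions; see, e.g., Stanley, \emph{Enumerative Combinatorics 2}, Corollary 7.23.4), so there is no internal argument to compare yours against, and your proposal must stand on its own. Your first step is correct and essentially complete: with your orientation of the ribbon and your reading order, the weak descents within rows and strict ascents at the single-column overlaps match the paper's defining conditions for $r_\alpha$ exactly, and since a ribbon contains no $2\times 2$ square, the overlap columns are the only column constraints, so the reading map is a bijection and $r_\alpha = s_{R_\alpha}$.

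Your second step, however, contains a genuine error as written. Every Littlewood--Richardson filling of $R_\alpha$ with content $\lambda$ rectifies to the \emph{same} tableau, namely the unique superstandard semistandard tableau of shape and content $\lambda$ (this is exactly what the lattice condition encodes), so ``the descent set of the rectified tableau'' is not a standard-tableau descent set and carries no information about the filling; the claimed verification cannot go through. If you instead rectify \emph{standard} fillings of $R_\alpha$, jeu de taquin does preserve descent sets, but the descent sets of elements of $\syt(R_\alpha)$ vary over the fillings (already for $\alpha=21$ the two standard fillings have descent sets $\{2\}$ and $\{1\}$), so they are not all equal to $\set(\alpha)$, and the resulting multiset identity on descent sets does not by itself isolate the coefficient of $s_\lambda$. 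Your fallback through the $F$-basis has the same gap: linear independence of the fundamentals lets you equate two $F$-expansions once you have them, but producing the needed $F$-expansion is precisely the descent-tracking bijection (RSK with descents recorded on both tableaux) that the route was meant to avoid. The repair is in fact simpler than jeu de taquin and stays inside your setup: send an LR filling of $R_\alpha$ with content $\lambda$ and ribbon reading word $u_1\cdots u_n$ (in your reading order) to the tableau $T$ whose row $v$ consists of the positions $i$ with $u_i = v$, placed in increasing order. The lattice condition on the reading word is equivalent to $T\in\syt(\lambda)$, semistandardness along the ribbon (weak within rows, strict at the overlaps) is equivalent to $D(T)=\set(\alpha)$, and the map is visibly invertible, giving $c^{R_\alpha}_\lambda=\#\{T\in\syt(\lambda): D(T)=\set(\alpha)\}$ directly.
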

\begin{example}
We have $r_{22}=s_{31}+s_{22}$ via the following standard Young tableaux $T$ with $D(T)=\{2\}$.
\ytableausetup{centertableaux}
\begin{center}
\begin{ytableau}
1 & 2 & 4 \\
3 \\
\end{ytableau}
\quad\quad\quad\quad \quad\quad
\begin{ytableau}
1 & 2 \\
3 & 4 \\
\end{ytableau}
\end{center}
\end{example}

\section{Melting lollipop graphs}\label{sec:melt}

In this section, we prove that the unicellular LLT polynomials of a class of graphs known as melting lollipop graphs expand as a nonnegative sum of ribbon Schur functions.

\begin{definition}
For $m\ge 1$ and $0\le k\le m-1$, the \textit{melting complete graph} $K_m^{(k)}$ is the unit interval graph on $m$ vertices obtained by removing the $k$ edges $(1,m),(1,m-1),\dots,(1,m-k+1)$ from the complete graph $K_m$.
\end{definition}

\begin{definition}
For $n\ge 0$, $m\ge 1$, and $0\le k\le m-1$, the \textit{melting lollipop graph} $L_{m,n}^{(k)}$ is the unit interval graph on $n+m$ vertices given by the concatenation $P_{n+1}+K_{m}^{(k)}$.
\end{definition}

\begin{example}
The graph drawn below is $L_{5,3}^{(2)}=P_4+K_5^{(2)}$.
\begin{center}
\begin{tikzpicture}

\coordinate (A) at (16.75,0);
\coordinate (B) at (17.5,0);
\coordinate (C) at (18.25,0);
\coordinate (D) at (19,0);
\coordinate (E) at (19.75,0);
\coordinate (F) at (20.5,0);
\coordinate (G) at (16,0);
\coordinate (H) at (15.25,0);
\draw[black] (H)--(G)--(A)--(B)--(C)--(D)--(E)--(F);
\draw[black] (19.75,0) arc (0:180:.75);
\draw[black] (19,0) arc (0:180:.75);
\draw[black] (20.5,0) arc (0:180:.75);
\draw[black] (20.5,0) arc (0:180:1.125);

\filldraw[black] (A) circle [radius=2pt];
\filldraw[black] (B) circle [radius=2pt];
\filldraw[black] (C) circle [radius=2pt];
\filldraw[black] (E) circle [radius=2pt];
\filldraw[black] (D) circle [radius=2pt];
\filldraw[black] (F) circle [radius=2pt];
\filldraw[black] (G) circle [radius=2pt];
\filldraw[black] (H) circle [radius=2pt];
\node [] at (18.25-.375,-.5) {$L_{5,3}^{(2)}$};
\end{tikzpicture}
\end{center}    
\end{example}

Before we prove our formula for melting lollipop graphs, we describe three simple lemmas.

\begin{lemma}\label{lemma:path}
Let $\mathbf a$ be the area sequence of the path $P_n$ on $n$ vertices, i.e. $\mathbf a=(1^{n-1})$. Then
$$\llt_{\mathbf a}(\mathbf x;q)=\sum_{\alpha\vDash n}q^{\mathbf a(\set(\alpha))}r_\alpha.$$
\end{lemma}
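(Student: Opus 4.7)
The plan is to prove the identity by partitioning the monomials defining $\llt_{\mathbf a}(\mathbf x; q)$ according to the ``ascent pattern'' of the colouring $\kappa$, and observing that each block of the partition matches exactly one ribbon Schur summand.

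First I would unpack the definitions for the specific case at hand. Since the edges of the path $P_n$ are precisely the pairs $(i,i+1)$ for $1\le i\le n-1$, for any $\kappa:[n]\to\N$ we have
\[
\asc(\kappa)=|S(\kappa)|,\qquad\text{where}\qquad S(\kappa):=\{i\in[n-1]:\kappa(i)<\kappa(i+1)\}.
\]
On the ribbon side, $\mathbf a=(1^{n-1})$ gives $\mathbf a(\set(\alpha))=|\set(\alpha)|$ for every composition $\alpha\vDash n$. So the claimed identity reduces to
\[
\sum_{\kappa:[n]\to\N}q^{|S(\kappa)|}x_{\kappa(1)}\cdots x_{\kappa(n)}=\sum_{\alpha\vDash n}q^{|\set(\alpha)|}r_\alpha.
\]

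Next, I would rewrite the right-hand side. By the definition of $r_\alpha$, the colourings $\kappa$ contributing to $r_\alpha$ are exactly those with $S(\kappa)=\set(\alpha)$ (the ascent positions are forced to equal $\set(\alpha)$, and all other positions are weak descents). Using the bijection $\alpha\leftrightarrow\set(\alpha)$ between compositions of $n$ and subsets of $[n-1]$, the right-hand side becomes
\[
\sum_{T\subseteq[n-1]}q^{|T|}\sum_{\substack{\kappa:[n]\to\N\\S(\kappa)=T}}x_{\kappa(1)}\cdots x_{\kappa(n)}.
\]
Since every $\kappa:[n]\to\N$ has exactly one ascent set $S(\kappa)$, swapping the order of summation collapses this to $\sum_\kappa q^{|S(\kappa)|}x_{\kappa(1)}\cdots x_{\kappa(n)}$, which is the left-hand side by the observation above.

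There is really no obstacle here; the proof is essentially matching definitions, and the only thing one has to be careful about is the bijection between compositions of $n$ and subsets of $[n-1]$ (so that ``sum over $\alpha\vDash n$'' and ``sum over $T\subseteq[n-1]$'' index the same data). This lemma serves as the base case for the more substantial inductive arguments that follow in the section, where the recurrence of Theorem~\ref{theorem:modular} will be used to handle graphs beyond the pure path.
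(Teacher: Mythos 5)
Your proposal is correct and follows essentially the same route as the paper's proof: both partition the colourings $\kappa$ by their ascent set $S(\kappa)\subseteq[n-1]$ and identify each block with the ribbon Schur function $r_\alpha$ for the composition $\alpha$ with $\set(\alpha)=S(\kappa)$, using that $\asc(\kappa)=|S(\kappa)|$ and $\mathbf a(\set(\alpha))=|\set(\alpha)|$ when $\mathbf a=(1^{n-1})$. The only cosmetic difference is the direction of the computation (you expand the right-hand side and collapse it to the left, while the paper expands the left-hand side), which does not change the argument.
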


\begin{proof}
We compute
\begin{align*}
    \llt_{\mathbf a}(\mathbf x;q)&=\sum_{\kappa:[n]\to\mathbb N}q^{\asc(\kappa)}x_{\kappa(1)}\cdots x_{\kappa(n)}\\
    &= \sum_{S\subseteq[n-1]}\sum_{\substack{\kappa:[n]\to\N\\\kappa(i)<\kappa(i+1)\text{ if }i\in S\\\kappa(i)\ge\kappa(i+1)\text{ if }i\not\in S}}q^{|S|} x_{\kappa(1)}\cdots x_{\kappa(n)}\\
    &=\sum_{\alpha\vDash n}q^{\mathbf a(\set(\alpha))}r_\alpha.
\end{align*}
\end{proof}

\begin{lemma}\label{lemma:union}
Let $\mathbf v=(v_1,\dots,v_{n-1})$ and $\mathbf w=(w_1,\dots,w_{m-1})$ be two lists of nonnegative integers. Then
$$\left(\sum_{\beta\vDash  n} q^{\mathbf v(\set(\beta))}r_\beta\right)\left(\sum_{\gamma\vDash m} q^{\mathbf w(\set(\gamma))}r_\gamma\right)=\sum_{\alpha\vDash  n+m} q^{(\mathbf v,0,\mathbf w)(\set(\alpha))}r_\alpha.$$
\end{lemma}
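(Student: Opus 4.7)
The plan is to expand the product on the left-hand side using the ribbon Schur product identity $r_\beta r_\gamma = r_{\beta\cdot\gamma}+r_{\beta\odot\gamma}$ recorded just after Proposition~\ref{prop:ribbon}, then match terms with the right-hand side via a natural bijection between pairs of compositions $(\beta,\gamma)$ with $\beta\vDash n$, $\gamma\vDash m$ and compositions $\alpha\vDash n+m$ together with a choice of whether $n\in\set(\alpha)$.

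First I would write
\[
\left(\sum_{\beta\vDash n} q^{\mathbf v(\set(\beta))}r_\beta\right)\!\left(\sum_{\gamma\vDash m} q^{\mathbf w(\set(\gamma))}r_\gamma\right)=\sum_{\beta,\gamma} q^{\mathbf v(\set(\beta))+\mathbf w(\set(\gamma))}\bigl(r_{\beta\cdot\gamma}+r_{\beta\odot\gamma}\bigr).
\]
Next I would observe that every composition $\alpha\vDash n+m$ arises in exactly one of the two forms $\alpha=\beta\cdot\gamma$ or $\alpha=\beta\odot\gamma$, distinguished by whether $n\in\set(\alpha)$: if $n\in\set(\alpha)$ then $\alpha=\beta\cdot\gamma$ with $\beta$ given by the first few parts of $\alpha$ summing to $n$ and $\gamma$ the remaining parts; if $n\not\in\set(\alpha)$ then $\alpha=\beta\odot\gamma$ where we split the unique part of $\alpha$ straddling position $n$. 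In either case the pair $(\beta,\gamma)$ is uniquely determined.

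The main verification is to check that the $q$-exponent is correct under this bijection. Using the definitions of $\cdot$ and $\odot$, one has $\set(\beta\cdot\gamma)=\set(\beta)\sqcup\{n\}\sqcup(n+\set(\gamma))$ and $\set(\beta\odot\gamma)=\set(\beta)\sqcup(n+\set(\gamma))$. Since the weight list is $(\mathbf v,0,\mathbf w)$ with a $0$ placed in position $n$, summing the weights over either set yields exactly $\mathbf v(\set(\beta))+\mathbf w(\set(\gamma))$; the extra index $n$ in the first case contributes $0$ and does not change the sum. This is the only routine calculation and there is no real obstacle — the $0$ padding in the list $(\mathbf v,0,\mathbf w)$ is precisely what is needed to absorb the asymmetry between $\cdot$ and $\odot$.

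Putting these pieces together gives
\[
\sum_{\beta,\gamma}q^{\mathbf v(\set(\beta))+\mathbf w(\set(\gamma))}\bigl(r_{\beta\cdot\gamma}+r_{\beta\odot\gamma}\bigr)=\sum_{\alpha\vDash n+m}q^{(\mathbf v,0,\mathbf w)(\set(\alpha))}r_\alpha,
\]
completing the proof. The whole argument should fit in a few lines once the bijection and exponent bookkeeping are spelled out.
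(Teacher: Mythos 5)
Your proof is correct and takes essentially the same approach as the paper: expand the product via $r_\beta r_\gamma = r_{\beta\cdot\gamma}+r_{\beta\odot\gamma}$ and reindex over $\alpha\vDash n+m$ by splitting according to whether $n\in\set(\alpha)$, with the $0$ in position $n$ of $(\mathbf v,0,\mathbf w)$ absorbing the marker $n\in\set(\beta\cdot\gamma)$. The paper's version is just terser, leaving the bijection and exponent bookkeeping implicit, which you spell out correctly.
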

\begin{proof}
We compute
\begin{align*}
    \left(\sum_{\beta\vDash  n} q^{\mathbf v(\set(\beta))}r_\beta\right)\left(\sum_{\gamma\vDash m} q^{\mathbf w(\set(\gamma))}r_\gamma\right)&=\sum_{\substack{\beta\vDash n\\\gamma\vDash m}}q^{\mathbf v(\set(\beta))+\mathbf w(\set(\gamma))}(r_{\beta\cdot\gamma}+r_{\beta\odot\gamma})\\
    &=\sum_{\substack{\alpha\vDash  n+m\\ n\in\set(\alpha)}}q^{(\mathbf v,0,\mathbf w)(\set(\alpha))}r_\alpha + \sum_{\substack{\alpha\vDash  n+m\\ n\not\in\set(\alpha)}}q^{(\mathbf v,0,\mathbf w)(\set(\alpha))}r_\alpha\\
    &=\sum_{\alpha\vDash  n+m} q^{(\mathbf v,0,\mathbf w)(\set(\alpha))}r_\alpha.
\end{align*}
\end{proof}

\begin{lemma}\label{lemma:prog}
Let $\mathbf v, \mathbf v',\mathbf v''$ be three lists of nonnegative integers of length $n-1$ that differ only in position $i$, such that $v_i=v_i'+1=v_i''+2$. Then
$$\sum_{\alpha\vDash  n} q^{\mathbf v(\set(\alpha))}r_\alpha+q\sum_{\alpha\vDash  n} q^{\mathbf v''(\set(\alpha))}r_\alpha=(1+q)\sum_{\alpha\vDash  n} q^{\mathbf v'(\set(\alpha))}r_\alpha.$$
\end{lemma}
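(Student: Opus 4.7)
The plan is to prove the identity by comparing the coefficient of $r_\alpha$ on each side for every composition $\alpha\vDash n$. Since the lists $\mathbf v, \mathbf v', \mathbf v''$ agree except in position $i$, the values $\mathbf v(\set(\alpha)),\,\mathbf v'(\set(\alpha)),\,\mathbf v''(\set(\alpha))$ depend on whether $i\in\set(\alpha)$, so I would split into the two corresponding cases.

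In the case $i\notin\set(\alpha)$, the three sums $\mathbf v(\set(\alpha))$, $\mathbf v'(\set(\alpha))$, $\mathbf v''(\set(\alpha))$ are all equal to some common value $c$, and then the coefficient of $r_\alpha$ on the left-hand side is $q^c+q\cdot q^c=(1+q)q^c$, which matches the right-hand side. In the case $i\in\set(\alpha)$, setting $c=\mathbf v'(\set(\alpha))$ and using the hypothesis $v_i=v_i'+1=v_i''+2$, we get $\mathbf v(\set(\alpha))=c+1$ and $\mathbf v''(\set(\alpha))=c-1$, so the left-hand side coefficient is $q^{c+1}+q\cdot q^{c-1}=q^{c+1}+q^c$, again matching $(1+q)q^c$.

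Since the ribbon Schur functions $\{r_\alpha\}_{\alpha\vDash n}$ are not linearly independent as symmetric functions, I would take care to phrase the argument as an equality of formal $q$-linear combinations of the symbols $r_\alpha$, so that the coefficientwise comparison is justified; the desired identity in the ring of symmetric functions then follows by applying the linear map $r_\alpha\mapsto r_\alpha$. There is no real obstacle here — the lemma is essentially the algebraic identity $q^{c+1}+q\cdot q^{c-1}=(1+q)q^c$ packaged combinatorially — and the whole proof should take only a few lines.
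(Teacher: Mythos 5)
Your proposal is correct and is essentially the paper's own proof: the paper likewise splits the sum over $\alpha\vDash n$ according to whether $i\in\set(\alpha)$ and factors out $(1+q)q^{\mathbf v'(\set(\alpha))}$ in each case. Your extra remark about linear dependence of the $r_\alpha$ is harmless but unnecessary, since matching the coefficients termwise in a common indexing of both sides already implies equality of the sums (linear independence would only be needed for the converse direction).
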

\begin{proof}
We compute
\begin{align*}
    \sum_{\alpha\vDash  n} q^{\mathbf v(\set(\alpha))}r_\alpha+q\sum_{\alpha\vDash  n} q^{\mathbf v''(\set(\alpha))}r_\alpha&=\sum_{\alpha\vDash n}(q^{\mathbf v(\set(\alpha))}+q^{\mathbf v''(\set(\alpha))+1})r_\alpha\\
    &=\sum_{\substack{\alpha\vDash n\\i\in\set(\alpha)}}(q+1)q^{\mathbf v'(\set(\alpha))}r_\alpha +\sum_{\substack{\alpha\vDash n\\i\not\in\set(\alpha)}}(1+q)q^{\mathbf v'(\set(\alpha))}r_\alpha\\
    &=(1+q)\sum_{\alpha\vDash  n} q^{\mathbf v'(\set(\alpha))}r_\alpha.
\end{align*}
\end{proof}

We now give a formula expanding the unicellular LLT polynomials of melting lollipops as a nonnegative sum of ribbon Schur functions.

\begin{theorem}\label{thm:melt}
Let $\mathbf a$ be the area sequence of the melting lollipop graph $L_{m,n}^{(k)}$ on $n+m$ vertices. Then
$$\llt_{\mathbf a}(\mathbf x;q)=\sum_{\alpha\vDash n+m}q^{\mathbf a(\set(\alpha))}r_\alpha.$$
\end{theorem}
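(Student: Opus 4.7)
The plan is to prove the formula by strong induction on $m$. For the base case $m=1$, we have $L_{1,n}^{(0)}=P_{n+1}$, so the formula follows immediately from Lemma~\ref{lemma:path}. For the inductive step with $m\ge 2$, I assume the formula holds for all melting lollipops $L_{m',n'}^{(k')}$ with $m'<m$ and then prove the formula for $L_{m,n}^{(k)}$ by a secondary downward induction on $k$, running from $k=m-1$ down to $k=0$.

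The secondary induction has two base cases. When $k=m-1$, every edge incident to vertex $1$ of $K_m^{(m-1)}$ has been deleted, so $L_{m,n}^{(m-1)}=P_{n+1}\cup K_{m-1}$ as a disjoint union. Its LLT polynomial therefore factors as $\llt_{P_{n+1}}(\mathbf x;q)\cdot\llt_{K_{m-1}}(\mathbf x;q)$, where the first factor is given by Lemma~\ref{lemma:path} and the second by the outer inductive hypothesis applied to $K_{m-1}=L_{m-1,0}^{(0)}$. Multiplying the two ribbon Schur expansions via Lemma~\ref{lemma:union} gives a sum over $\alpha\vDash n+m$ weighted by $(1^n,0,\mathbf c)$, where $\mathbf c$ is the area sequence of $K_{m-1}$; this weight sequence is exactly the area sequence of $L_{m,n}^{(m-1)}$, so the formula follows. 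When $k=m-2$, a direct inspection gives $K_m^{(m-2)}=P_2+K_{m-1}$, whence $L_{m,n}^{(m-2)}=P_{n+1}+P_2+K_{m-1}=P_{n+2}+K_{m-1}=L_{m-1,n+1}^{(0)}$, so the formula follows at once from the outer inductive hypothesis.

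For the secondary inductive step $0\le k\le m-3$, I apply the modular recurrence (Theorem~\ref{theorem:modular}) at position $i=n+1$, where $a_{n+1}=m-k-1$. One checks the two hypotheses: the condition $a_n+1\le a_{n+1}$ reduces to $k\le m-3$ (with $a_0:=0$ when $n=0$), and the condition $a_{n+m-k-1}=a_{n+m-k}+1$ evaluates to $k+1=k+1$ directly from the explicit form of the area sequence (with the convention $a_{n+m}:=0$ when $k=0$). The resulting identity
$$\llt_{L_{m,n}^{(k)}}+q\,\llt_{L_{m,n}^{(k+2)}}=(1+q)\,\llt_{L_{m,n}^{(k+1)}},$$
combined with the downward inductive hypothesis for $k+1$ and $k+2$ and with Lemma~\ref{lemma:prog} applied to the three area sequences (which differ only in position $n+1$ with the required arithmetic progression $m-k-1,\,m-k-2,\,m-k-3$), yields the claimed formula for $L_{m,n}^{(k)}$.

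The main obstacle is careful bookkeeping: identifying $L_{m,n}^{(m-2)}$ with $L_{m-1,n+1}^{(0)}$, confirming that the area sequence arising in the disjoint union case coincides with that of $L_{m,n}^{(m-1)}$, and verifying the modular law's two conditions uniformly across $0\le k\le m-3$ and across the boundary cases $n=0$ and $k=0$. Once these identifications are in place, Lemmas~\ref{lemma:path}, \ref{lemma:union}, and \ref{lemma:prog} line up exactly with the three situations encountered in the induction, and the argument closes.
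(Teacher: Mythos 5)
Your proposal is correct and follows essentially the same route as the paper: Lemma~\ref{lemma:path} for the path base case, the disjoint-union factorization with Lemma~\ref{lemma:union} for $k=m-1$, the identification $L_{m,n}^{(m-2)}=L_{m-1,n+1}^{(0)}$ for $k=m-2$, and Theorem~\ref{theorem:modular} together with Lemma~\ref{lemma:prog} at position $n+1$ for $k\le m-3$. The only difference is bookkeeping: you use a double induction (strong induction on $m$ with $n$ and $k$ quantified, then downward on $k$), whereas the paper runs a triple induction (on $n+m$, then $m$, then $k$); both are well-founded, and your explicit verification of the modular law's hypotheses, including the boundary conventions at $n=0$ and $k=0$, is if anything more careful than the paper's.
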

\begin{proof}
    Our proof employs a triple induction. We first induct on the size of $n+m$.
    
    For $n+m=1$, the associated unit interval graph must be $P_1$, and the result is immediate. 
    
    For the inductive step, assume $n+m\ge 2$. We then induct on the size of $m$ for fixed $n+m$. For $m=1$, the unit interval graph $L_{m,n}^{(k)}$ is just the path $P_{n+1}$, and the result follows from Lemma~\ref{lemma:path}. 
    
    In the inductive step of the second induction, we assume $m\ge 2$. We finally induct on the size of $k$, as $k$ grows smaller from $k=m-1$ to $k=0$. The base cases in this induction are $k=m-1$ and $k=m-2$.

    When $k=m-1$, the associated melting lollipop graph $L_{m,n}^{(k)}$ is the disjoint union $P_{n+1}\cup K_{m-1}$. Note $K_{m-1}$ is itself a melting lollipop graph with area sequence $(m-2,m-3,\dots 1)$ on strictly fewer than $n+m$ vertices. Applying Lemma~\ref{lemma:path} and Lemma~\ref{lemma:union} in the case where $L_{m,n}^{(k)}=P_{n+1}\cup K_{m-1}$ (and hence has area sequence $\mathbf a=(1^n,0,m-2,m-3,\dots 1)$),
    \begin{align*}
    \llt_{\mathbf a}(\mathbf x;q)&=\left(\sum_{\beta\vDash n+1} q^{(1^n)(\set(\beta))r_\beta}\right)\left(\sum_{\gamma\vDash m-1}q^{(m-2,m-3,\dots,1)(\set(\gamma))}r_\gamma\right)\\
    &=\sum_{\alpha\vDash n+m}q^{(1^n,0,m-2,m-3,\dots,1)(\set(\alpha))}r_\alpha=\sum_{\alpha\vDash n+m}q^{\mathbf a(\set(\alpha))}r_\alpha.
    \end{align*}
    The other base case $k=m-2$ is given by the inductive hypotheses, since then $L_{m,n}^{(k)}=L_{m-1,n+1}^{(0)}$.
    
    Finally, we assume $k\le m-3$ in the inductive step of our third induction. Let $\mathbf a'$ and $\mathbf a''$ denote the area sequences of $L_{m,n}^{(k+1)}$ and $L_{m,n}^{(k+2)}$, respectively. Note $\mathbf a$, $\mathbf a'$, and $\mathbf a''$ differ only in position $n+1$ with $a_{n+1}=a_{n+1}'+1=a_{n+1}''+2$, and the hypotheses of both Theorem~\ref{theorem:modular} and Lemma~\ref{lemma:prog} are satisfied. Therefore,
    \begin{align*}
        \llt_{\mathbf a}(\mathbf x;q)&=(1+q)\llt_{\mathbf a'}(\mathbf x;q)-q\llt_{\mathbf a''}(\mathbf x;q)\\
        &=(1+q)\sum_{\alpha\vDash  n+m} q^{\mathbf a'(\set(\alpha))}r_\alpha-q\sum_{\alpha\vDash  n+m} q^{\mathbf a''(\set(\alpha))}r_\alpha\\
        &=\sum_{\alpha\vDash  n+m} q^{\mathbf a(\set(\alpha))}r_\alpha,
    \end{align*}
    where the second equality is by applying the inductive hypotheses. This completes the proof of the third inductive step, finishing the proof of the theorem.
\end{proof}

\section{Two-headed melting lollipop graphs}\label{sec:two}

In this section, we prove a nonnegative formula into ribbon Schurs for the unicellular LLT polynomials of a larger class of unit interval graphs, which we call two-headed melting lollipops.

\begin{definition}
For $n\ge -1$, $m_1\ge 1$, $0\le k_1\le m_1-1$, $m_2\ge 1$, and $0\le k_2 \le m_2-1$, the \textit{two-headed melting lollipop graph} $\prescript{(k_1)}{m_1}{L_{m_2,n}^{(k_2)}}$ is the unit interval graph on $m_1+n+m_2$ vertices given by the concatenation $(K_{m_1}^{(k_1)})^r+P_{n+2}+K_{m_2}^{(k_2)}.$
\end{definition}
\begin{example}
The graph drawn below is
\begin{center}
\begin{tikzpicture}

\coordinate (A) at (16.75,0);
\coordinate (B) at (17.5,0);
\coordinate (C) at (18.25,0);
\coordinate (D) at (19,0);
\coordinate (E) at (19.75,0);
\coordinate (F) at (20.5,0);
\coordinate (G) at (16,0);
\coordinate (H) at (15.25,0);
\coordinate (I) at (14.5,0);
\coordinate (J) at (13.75,0);
\draw[black] (J)--(I)--(H)--(G)--(A)--(B)--(C)--(D)--(E)--(F);
\draw[black] (19.75,0) arc (0:180:.75);
\draw[black] (19,0) arc (0:180:.75);
\draw[black] (15.25,0) arc (0:180:.75);
\draw[black] (16,0) arc (0:180:.75);
\draw[black] (20.5,0) arc (0:180:.75);
\draw[black] (20.5,0) arc (0:180:1.125);

\filldraw[black] (A) circle [radius=2pt];
\filldraw[black] (B) circle [radius=2pt];
\filldraw[black] (C) circle [radius=2pt];
\filldraw[black] (E) circle [radius=2pt];
\filldraw[black] (D) circle [radius=2pt];
\filldraw[black] (F) circle [radius=2pt];
\filldraw[black] (G) circle [radius=2pt];
\filldraw[black] (H) circle [radius=2pt];
\filldraw[black] (I) circle [radius=2pt];
\filldraw[black] (J) circle [radius=2pt];
\node [] at (18.25-.375-.75,-.5) {$\prescript{(1)}{4}{L_{5,1}}^{(2)}$};
\end{tikzpicture}
\end{center}   
\end{example}
\begin{theorem}\label{thm:twoheaded}
Let $\mathbf a$ be the area sequence of the two-headed melting lollipop graph $\prescript{(k_1)}{m_1}{L_{m_2,n}^{(k_2)}}$ on $m_1+n+m_2$ vertices. Let $\mathbf b$ denote the modified sequence $(1,2,\dots,m_1-2,m_1-k_1-1,a_{m_1},a_{m_1+1},\dots ,a_{m_1+n+m_2})$. (When $m_1=1$, we understand the sequence $\mathbf b$ to just be $\mathbf a$.) Then
$$\llt_{\mathbf a}(\mathbf x;q)=\sum_{\alpha \vDash n+m}q^{\mathbf b(\set(\alpha))}r_\alpha.$$
\end{theorem}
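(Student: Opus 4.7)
\emph{Proof plan.} My proof would follow the same overall structure as that of Theorem~\ref{thm:melt}: a multi-level induction whose innermost step applies Theorem~\ref{theorem:modular} and Lemma~\ref{lemma:prog} to reduce the melting parameter $k_2$. The key new ingredient is the following symmetry observation, which I would record as a one-line lemma: for any list of nonnegative integers $\mathbf v=(v_1,\ldots,v_{n-1})$,
\[
\sum_{\alpha\vDash n} q^{\mathbf v(\set(\alpha))}\, r_\alpha \;=\; \sum_{\alpha\vDash n} q^{\mathbf v^r(\set(\alpha))}\, r_\alpha,
\]
which follows from $r_\alpha=r_{\alpha^r}$, the bijectivity of $\alpha\mapsto\alpha^r$ on compositions of $n$, and the identity $\mathbf v^r(\set(\alpha))=\mathbf v(\set(\alpha^r))$. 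The main induction is on $m_2$.

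For the base case $m_2=1$ (which forces $k_2=0$), the graph $(K_{m_1}^{(k_1)})^r+P_{n+2}$ is exactly the reverse of the melting lollipop $L_{m_1,n+1}^{(k_1)}$, so Theorem~\ref{thm:melt} together with $\llt_{\mathbf a}=\llt_{\mathbf a^T}$ gives $\llt_{\mathbf a}=\sum q^{\mathbf a^T(\set(\alpha))} r_\alpha$. A direct computation of the area sequence of $L_{m_1,n+1}^{(k_1)}$ yields $\mathbf a^T=(1,\ldots,1,m_1-k_1-1,m_1-2,m_1-3,\ldots,1)$ (with $n+1$ leading ones), and its reverse is exactly $\mathbf b$, so the symmetry observation closes the case. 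This is the step that ``explains'' the asymmetric prefix $(1,2,\ldots,m_1-2,m_1-k_1-1)$ appearing in the definition of $\mathbf b$.

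For $m_2\ge 2$ I would run an inner induction on $k_2$ downward from $k_2=m_2-1$ to $k_2=0$, with two base cases. When $k_2=m_2-1$, vertex $1$ of $K_{m_2}^{(m_2-1)}$ is isolated, so the graph decomposes as the disjoint union $((K_{m_1}^{(k_1)})^r+P_{n+2})\cup K_{m_2-1}$; its LLT polynomial factors, and combining the $m_2=1$ formula above with Theorem~\ref{thm:melt} for $K_{m_2-1}$ via Lemma~\ref{lemma:union} yields the expansion, with the ``connecting zero'' in Lemma~\ref{lemma:union} playing the role of the entry $a_{m_1+n+1}=0$ in $\mathbf b$. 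When $k_2=m_2-2$, the identity $K_{m_2}^{(m_2-2)}=P_2+K_{m_2-1}$ rewrites the graph as the two-headed melting lollipop $\prescript{(k_1)}{m_1}{L_{m_2-1,n+1}^{(0)}}$ with strictly smaller $m_2$, and the outer inductive hypothesis applies directly.

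Finally, for $k_2\le m_2-3$, the area sequences $\mathbf a,\mathbf a',\mathbf a''$ corresponding to $k_2,k_2+1,k_2+2$ differ only at position $i=m_1+n+1$, with $a_i=m_2-k_2-1=a_i'+1=a_i''+2$; the two hypotheses of Theorem~\ref{theorem:modular} are immediate since $a_{i-1}=1<a_i$ and both $a_{i+a_i-1}$ and $a_{i+a_i}+1$ equal $k_2+1$. The modified sequences $\mathbf b,\mathbf b',\mathbf b''$ differ in exactly the same way at the same position, so applying Theorem~\ref{theorem:modular}, then the inductive hypothesis, then Lemma~\ref{lemma:prog} closes the induction. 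I expect the main obstacle to be not the inductive step itself but the bookkeeping required to verify that each of the three reductions in the base cases (transpose in the $m_2=1$ case, disjoint union in $k_2=m_2-1$, and the $P_2+K_{m_2-1}$ rewriting in $k_2=m_2-2$) produces exactly the sequence $\mathbf b$ prescribed in the theorem statement.
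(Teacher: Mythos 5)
Your proposal is correct and matches the paper's proof essentially step for step: the same double induction (outer on $m_2$, inner downward on $k_2$), the same three base-case reductions (reversal to $L_{m_1,n+1}^{(k_1)}$ when $m_2=1$, disjoint union with $K_{m_2-1}$ via Lemma~\ref{lemma:union} when $k_2=m_2-1$, re-identification as $\prescript{(k_1)}{m_1}{L_{m_2-1,n+1}^{(0)}}$ when $k_2=m_2-2$), and the same inductive step combining Theorem~\ref{theorem:modular} with Lemma~\ref{lemma:prog} at position $m_1+n+1$. The only cosmetic difference is that you isolate the reversal symmetry $\sum_\alpha q^{\mathbf v(\set(\alpha))}r_\alpha=\sum_\alpha q^{\mathbf v^r(\set(\alpha))}r_\alpha$ as a standalone lemma, whereas the paper applies it inline via $r_\alpha=r_{\alpha^r}$ in the $m_2=1$ base case.
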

\begin{proof}
    Our proof this time will employ a double induction. We begin by inducting on the size of $m$.

    For the base case $m_2=1$, note the two-headed melting lollipop $\prescript{(k_1)}{m_1}{L_{m_2,n}^{(k_2)}}$ is just $(L_{m_1,n+1}^{(k_2)})^r$ the reverse of the melting lollipop graph $L_{m_1,n+1}^{(k_2)}$. Thus $\mathbf a^T$ is the reverse of $\mathbf b$, with $\mathbf b$ being the reverse of the area sequence of $L_{m_1,n+1}^{(k_2)}$ in this case. The result then follows because
    $$\llt_{\mathbf a}(\mathbf x;q)=\llt_{\mathbf a^T}(\mathbf x;q)=\sum_{\alpha\vDash n+m}q^{\mathbf b(\set(\alpha))}r_{\alpha^r}=\sum_{\alpha\vDash n+m}q^{\mathbf b(\set(\alpha))}r_{\alpha},$$
    with the second equality by Theorem~\ref{thm:melt}.

    For the inductive step, assume $m_2\ge 2$. We now induct on the size of $k_2$, as $k_2$ grows smaller from $k_2=m_2-1$ to $k_2=0$. The base cases in this induction are $k_2=m_2-1$ and $k_2=m_2-2$.

    When $k_2=m_2-1$, the two-headed melting lollipop $\prescript{(k_1)}{m_1}{L_{m_2,n}^{(k_2)}}$ is $(L_{m_1,n+1}^{(k_1)})^r\cup K_{m_2-1}$. In this case, $\mathbf b=(1,2\dots m_1-2,m_1-k_1-1,1^{n+1},0,m_2-2,m_2-3,\dots,1)$. Since the LLT polynomial of $\mathbf a$ is the product of the LLT polynomials of the area sequences associated with $(L_{m_1,n+1}^{(k_1)})^r$ and $K_{m_2-1}$, we find that
    \begin{align*}
        \llt_{\mathbf a}(\mathbf x;q)&=\left(\sum_{\beta\vDash m_1+n+1}q^{(1,2\dots m_1-2,m_1-k_1-1,1^{n+1})(\set(\beta))}r_\beta\right)\left(\sum_{\gamma\vDash{m_2-1}}q^{(m_2-2,m_2-3,\dots,1)(\set(\gamma))}\right)\\
        &=\sum_{\alpha \vDash n+m}q^{\mathbf b(\set(\alpha))}r_\alpha.
    \end{align*}
    Here, the first equality applies the inductive hypotheses to give the expression for the LLT polynomial associated to $(L_{m_1,n+1}^{(k_1)})^r=\prescript{(k_1)}{m_1}{L_{1,n}^{(0)}}$ and Theorem~\ref{thm:melt} for the LLT polynomial associated to $K_{m_2-1}$. The second equality is an application of Lemma~\ref{lemma:union}.

    For the case $k_2=m_2-2$, note $\prescript{(k_1)}{m_1}{L_{m_2,n}^{(k_2)}}$ is the two-headed melting lollipop $\prescript{(k_1)}{m_1}{L_{m_2-1,n+1}^{(0)}}$, and the result follows by applying the inductive hypotheses.

    Finally, we assume $k_2\le m_2-3$ in the inductive step of the second induction. Let $\mathbf a'$ and $\mathbf a''$ denote the area sequences of $\prescript{(k_1)}{m_1}{L_{m_2,n}^{(k_2+1)}}$ and $\prescript{(k_1)}{m_1}{L_{m_2,n}^{(k_2+2)}}$, respectively, and let $\mathbf b'$ and $\mathbf b''$ be the associated sequences constructed by the statement of the theorem. Note $\mathbf a,\mathbf a',\mathbf a''$ satisfy the conditions of Theorem~\ref{theorem:modular} in position $m_1+n+1$ and $\mathbf b,\mathbf b',\mathbf b''$ satisfy the conditions of Lemma~\ref{lemma:prog} in position $m_1+n+1$. Therefore,
    \begin{align*}
    \llt_{\mathbf a}(\mathbf x;q)&=(1+q)\llt_{\mathbf a'}(\mathbf x;q)-q\llt_{\mathbf a''}(\mathbf x;q)\\
    &=(1+q)\sum_{\alpha \vDash n+m}q^{\mathbf b'(\set(\alpha))}r_\alpha-q\sum_{\alpha \vDash n+m}q^{\mathbf b''(\set(\alpha))}r_\alpha=\sum_{\alpha \vDash n+m}q^{\mathbf b(\set(\alpha))}r_\alpha.
    \end{align*}
    This completes the proof of the second inductive step, finishing the proof of the theorem.
\end{proof}

For completeness, we state an explicit combinatorial Schur expansion for the unicellular LLT polynomials of two-headed melting lollipop graphs by applying Proposition~\ref{prop:ribbon} to the statement of Theorem~\ref{thm:twoheaded}.

\begin{corollary}
Let $\mathbf a$ be the area sequence of the two-headed melting lollipop graph $\prescript{(k_1)}{m_1}{L_{m_2,n}^{(k_2)}}$ on $m_1+n+m_2$ vertices. Let $\mathbf b$ denote the modified sequence $(1,2,\dots,m_1-2,m_1-k_1-1,a_{m_1},a_{m_1+1},\dots ,a_{m_1+n+m_2})$. Then
$$\llt_{\mathbf a}(\mathbf x;q)=\sum_{\lambda \vdash m_1+n+m_2}\sum_{T\in\syt(\lambda)}q^{\mathbf b(D(T))}s_\lambda.$$
\end{corollary}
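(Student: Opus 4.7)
The plan is to obtain the corollary as a one-step substitution: start from the ribbon expansion provided by Theorem~\ref{thm:twoheaded}, and then replace each ribbon Schur function by its Schur expansion from Proposition~\ref{prop:ribbon}. Since both results are already proved in the paper, no new combinatorial or algebraic machinery is needed; the only thing to be careful about is bookkeeping the sums so that the exponent of $q$ is transferred correctly from $\set(\alpha)$ to $D(T)$.

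Concretely, first I would write
\[
\llt_{\mathbf a}(\mathbf x;q)=\sum_{\alpha \vDash m_1+n+m_2}q^{\mathbf b(\set(\alpha))}r_\alpha
\]
using Theorem~\ref{thm:twoheaded}. Next, I would substitute
\[
r_\alpha=\sum_{\lambda\vdash m_1+n+m_2}\sum_{\substack{T\in\syt(\lambda)\\ D(T)=\set(\alpha)}}s_\lambda
\]
from Proposition~\ref{prop:ribbon} and swap the order of summation, noting that the bijection between compositions of $m_1+n+m_2$ and subsets of $[m_1+n+m_2-1]$ means that for every standard Young tableau $T$ there is a \emph{unique} composition $\alpha$ with $\set(\alpha)=D(T)$. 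For that composition the exponent $\mathbf b(\set(\alpha))$ equals $\mathbf b(D(T))$, which collapses the double sum on the right to
\[
\sum_{\lambda \vdash m_1+n+m_2}\sum_{T\in\syt(\lambda)}q^{\mathbf b(D(T))}s_\lambda,
\]
as claimed.

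There is essentially no obstacle here, since both key ingredients are already established; the mildly subtle point is just to observe that summing $\alpha\vDash m_1+n+m_2$ and then over $T$ with $D(T)=\set(\alpha)$ is the same as summing directly over $T\in\syt(\lambda)$ for each $\lambda\vdash m_1+n+m_2$, because $\alpha$ is recovered from $D(T)$ via the composition/subset bijection. This is the only step where one must be careful not to lose or double-count terms.
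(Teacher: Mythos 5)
Your proposal is correct and matches the paper exactly: the paper derives this corollary by applying Proposition~\ref{prop:ribbon} to the ribbon expansion of Theorem~\ref{thm:twoheaded}, which is precisely your substitution. Your observation that each standard Young tableau $T$ corresponds to a unique composition $\alpha$ with $\set(\alpha)=D(T)$ is the right bookkeeping step to collapse the double sum.
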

\begin{example}
Let $\mathbf a = (2,1,2,1)$, which is the area sequence of a two-headed melting lollipop. We construct the modified sequence $\mathbf b=(1,2,2,1)$. The coefficient of $s_{32}$ in the Schur expansion of $\llt_{\mathbf a}(\mathbf x;q)$ is $q^{1+2}+q^{1+1}+q^{2}+q^{2+1}+q^2=3q^2+2q^3,$ from the following standard Young tableaux.
\begin{center}
\begin{ytableau}
1 & 3 & 5 \\
2&4 \\
\end{ytableau}
\quad\quad
\begin{ytableau}
1 & 3 & 4 \\
2 & 5 \\
\end{ytableau}
\quad\quad
\begin{ytableau}
1 & 2 & 5 \\
3 & 4 \\
\end{ytableau}
\quad\quad
\begin{ytableau}
1 & 2 & 4 \\
3 & 5 \\
\end{ytableau}
\quad\quad
\begin{ytableau}
1 & 2 & 3 \\
4 & 5 \\
\end{ytableau}
\end{center}
\end{example}
\begin{remark}
When $m_1=1$, i.e. when $\prescript{(k_1)}{m_1}{L_{m_2,n}^{(k_2)}}$ is a melting lollipop graph, we recover \cite[Proposition 5.9]{HuhNamYoo}.
\end{remark}

\section{Acknowledgments}\label{sec:ack}  The author would like to thank Per Alexandersson for helpful feedback on an early draft.

\bibliographystyle{plain}

\begin{thebibliography}{1}

\bibitem{AP}
Per Alexandersson and Greta Panova.
\newblock {LLT} polynomials, chromatic quasisymmetric functions and graphs with
  cycles.
\newblock {\em Discrete Mathematics}, 341(12):3453--3482, 2018.

\bibitem{Blasiak}
Jonah Blasiak.
\newblock Haglund's conjecture on 3-column {M}acdonald polynomials.
\newblock {\em Mathematische Zeitschrift}, 283(1):601--628, 2016.

\bibitem{GrojHaiman}
Ian Grojnowski and Mark Haiman.
\newblock Affine {H}ecke algebras and positivity of {LLT} and {M}acdonald
  polynomials.
\newblock Preprint, 2007.

\bibitem{HuhNamYoo}
JiSun Huh, Sun-Young Nam, and Meesue Yoo.
\newblock Melting lollipop chromatic quasisymmetric functions and {S}chur
  expansion of unicellular {LLT} polynomials.
\newblock {\em Discrete Mathematics}, 343(3):111728, 2020.

\bibitem{LLT}
Alain Lascoux, Bernard Leclerc, and Jean-Yves Thibon.
\newblock Ribbon tableaux, {H}all–{L}ittlewood functions, quantum affine
  algebras, and unipotent varieties.
\newblock {\em Journal of Mathematical Physics}, 38(2):1041--1068, 02 1997.

\bibitem{Lee}
Seung~Jin Lee.
\newblock Linear relations on {LLT} polynomials and their $k$-{S}chur
  positivity for $k=2$.
\newblock {\em Journal of Algebraic Combinatorics}, 53(4):973--990, 2021.

\bibitem{Roberts}
Austin Roberts.
\newblock Dual equivalence graphs revisited and the explicit {S}chur expansion
  of a family of {LLT} polynomials.
\newblock {\em Journal of Algebraic Combinatorics}, 39(2):389--428, 2014.

\bibitem{Tom}
Foster Tom.
\newblock A combinatorial {S}chur expansion of triangle-free horizontal-strip
  {LLT} polynomials.
\newblock {\em Combinatorial Theory}, 1, 2021.

\end{thebibliography}

\end{document}